\documentclass[twocoloumn]{IEEEtran}

\makeatletter
\def\ps@headings{%
\def\@oddhead{\mbox{}\scriptsize\rightmark \hfil \thepage}%
\def\@evenhead{\scriptsize\thepage \hfil \leftmark\mbox{}}%
\def\@oddfoot{}%
\def\@evenfoot{}}
\makeatother \pagestyle{headings}

\usepackage{amsmath}
\usepackage{txfonts}
\usepackage{amssymb}
\usepackage{booktabs}
\usepackage{graphicx}
\usepackage[tight,footnotesize]{subfigure}
\usepackage{cite}
\usepackage{url}
\usepackage{tabularx}
\usepackage{algorithm}
\usepackage{algorithmic}
\usepackage{multirow}
\usepackage{paralist}
\usepackage{xspace}
\usepackage{color}

\newtheorem{thm}{Theorem}

\newcommand{\minC}{\textsc{MinCost}\xspace}
\newcommand{\maxC}{\textsc{MaxCred}\xspace}
\newcommand{\minCDP}{\textsc{MinCost-DP}\xspace}
\newcommand{\maxCDP}{\textsc{MaxCred-DP}\xspace}
\newcommand{\minCMCF}{\textsc{MinCost-MCF}\xspace}
\newcommand{\maxCMCF}{\textsc{MaxCred-MCF}\xspace}
\newcommand{\maxCTwoF}{\textsc{MaxCred-2F}\xspace}
\newcommand{\minCAnn}{\textsc{MinCost-CC}\xspace}
\newcommand{\maxCAnn}{\textsc{MaxCred-CC}\xspace}
\newcommand{\minCS}{\textsc{MinCost-Stochastic}\xspace}
\newcommand{\maxCS}{\textsc{MaxCred-Stochastic}\xspace}

\newcommand{\expect}[1]{\mathbb{E}\left\{#1\right\}}
\newcommand{\defequiv}{\mbox{\raisebox{-.3ex}{$\overset{\vartriangle}{=}$}}}

\newcommand{\bv}[1]{{\boldsymbol{#1} }}
\newcommand{\script}[1]{{{\cal{#1} }}}

\newtheorem{theorem}{Theorem}[section]
\newtheorem{lemma}[theorem]{Lemma}

\newenvironment{proof}[1][Proof]{\begin{trivlist}
\item[\hskip \labelsep {\bfseries #1}]}{\end{trivlist}}

\definecolor{brown}{cmyk}{0,0.81,1,0.60}
\definecolor{magenta}{rgb}{0.4,0.7,0}
\definecolor{gray}{rgb}{0.5,0.5,0.5}
\definecolor{red}{rgb}{1,0,0}
\definecolor{green}{rgb}{0.5,0,0.5}
\definecolor{blue}{rgb}{0,0,1}

\newcommand{\qed}{\nobreak \ifvmode \relax \else
      \ifdim\lastskip<1.5em \hskip-\lastskip
      \hskip1.5em plus0em minus0.5em \fi \nobreak
      \vrule height0.75em width0.5em depth0.25em\fi}

\usepackage{paralist}

\begin{document}

\title{Optimizing Information Credibility in Social Swarming Applications}

\author
{ Bin Liu$^*$, Peter Terlecky$^\ddagger$, Amotz Bar-Noy$^\ddagger$,
Ramesh Govindan$^*$, Michael J. Neely$^*$
\\

\thanks{
Research was sponsored by the Army Research Laboratory and was
accomplished under Cooperative Agreement Number W911NF-09-2-0053.
The views and conclusions contained in this document are those of
the authors and should not be interpreted as representing the
official policies, either expressed or implied, of the Army Research
Laboratory or the U.S. Government.  The U.S. Government is
authorized to reproduce and distribute reprints for Government
purposes notwithstanding any copyright notation here on.}

\begin{tabular}{cc}
  $^*$University of Southern California, Los Angeles &
  $~^\ddagger$City University of New York
\end{tabular}

}

\maketitle

\begin{abstract}
With the advent of smartphone technology, it has
become possible to conceive of entirely new classes of applications.
\emph{Social swarming}, in which users armed with smartphones are
directed by a central \emph{director} to report on events in the physical
world, has several real-world applications: search and rescue,
coordinated fire-fighting, and the DARPA balloon hunt challenge.
In this paper, we focus on the following problem: how does the
director optimize the selection of reporters to deliver credible
\emph{corroborating} information about an event.
We first propose a model, based on common intuitions of believability,
about the credibility of information.
We then cast the problem posed above as a discrete optimization
problem, and introduce optimal centralized solutions and an
approximate solution amenable to decentralized implementation whose
performance is about 20\% off on average from the optimal (on
real-world datasets derived from Google News) while being 3 orders
of magnitude more computationally efficient.
More interesting, a time-averaged version of the problem is amenable
to a novel stochastic utility optimization formulation, and can be
solved optimally, while in some cases yielding decentralized
solutions.
To our knowledge, we are the first to propose and explore the problem
of extracting credible information from a network of smartphones.
\end{abstract}

\section{Introduction}
\label{sec:introduction}

With the advent of smartphone technology, it has become possible to
conceive of entirely new classes of applications.
Recent research has considered personal reflection~\cite{PEIR}, social
sensing~\cite{CenceMe}, lifestyle and activity
detection~\cite{Activity}, and advanced speech and image processing
applications~\cite{CyberF}.
These applications are enabled by the programmability of
smartphones, their considerable computing power, and the presence of
a variety of sensors on-board.

In this paper, we consider a complementary class of potential
applications, enabled by the same capabilities, that we call
\emph{social swarming}.
In this class of applications, a \emph{swarm} of users, each armed
with a smart phone, cooperatively and collaboratively engages in one
or more tasks.
These users often receive instructions from or send \emph{reports}
(a video clip, an audio report, a text message, or etc.) to a
\emph{swarm
  director}.
Because directors have a global view of information from different
users, directors are able to manage the swarm efficiently to achieve the
task's objectives.
Beyond the obvious military applications, there are several civilian
ones: search and rescue, coordinated fire-fighting, and the DARPA
balloon hunt challenge\footnote{
\texttt{http://www.crn.com/networking/222000334}}.

In these applications, an important challenge is to obtain
\emph{credible} (or believable) information.
In general, sociologists have observed three ways in which
believable information might be obtained~\cite{Dyads}:
\emph{homophily}, by which people believe like-minded people;
\emph{test-and-validate}, by which the recipient of information
tests the correctness of the information; and \emph{corroboration},
where the belief in information is reinforced by several sources
reporting the same (or similar) information.
The process by which humans believe information is exceedingly
complex, and an extended discussion is beyond the scope of this paper.

Instead, our focus is on simple and tractable models for corroboration
in social swarming type applications.
Specifically, the scenario we consider is the following.
Suppose that an \emph{event} (say, a balloon sighting) is reported
to a swarm director.
%
The director would like to corroborate this report by obtaining
reports from other swarm members: which reporters should she
select?
We call this the \emph{corroboration pull} problem.
Clearly, asking every swarm member to report is unnecessary, at best:
swarms can have several hundred participants, and a video report from
each of them can overwhelm the network.
Thus, intuitively, the director would like to \emph{selectively}
request reports from a subset of swarm members, while managing the
network resources utilized.

In this paper, we formalize this intuition and study the space of
corroboration pull formulations.
Our contributions are three-fold.
{\bf $1)$}
We introduce a model for the credibility of reports. This
  model quantifies common intuitions about the believability of
  information: for example, that video is more believable than text,
  and that a reporter closer to an event is more believable than one
  further away (Section~\ref{sec:model}).
{\bf $2)$}
We then cast the \emph{one-shot} corroboration pull problem as a
  discrete optimization problem and show that it reduces to a
  multiple-choice knapsack problem with weakly-polynomial optimal
  solutions. We develop strongly-polynomial, but inefficient,
  solutions for the case when the number of formats is fixed, and an
  optimal algorithm for the case of two formats. Finally, we derive an
  approximation algorithm for the general case that leverages the
  structure of our credibility model. This algorithm is about 20\% off
  the optimal, but its running time is 2-3 orders of magnitude faster
  than the optimal algorithm, a running time difference that can make the
  difference between winning and losing in, say, a balloon hunt.
{\bf $3)$}
We then show that, interestingly, the \emph{renewals} version of
  the problem, where the goal is to optimize corroboration pull in a
  time-averaged sense, can be solved optimally, while admitting a
  completely decentralized solution.

%


\section{Terminology, Model, and Optimization Formulation}
\label{sec:model}

As smart phones proliferate, social swarming applications are likely
to become increasingly common.
In this paper, we consider a constrained form of a social swarming
application in which $N$ participants, whom we call \emph{reporters},
collaboratively engage in a well-defined task.
Each reporter is equipped with a smart phone and directly reports to
a swarm director
using the 3G/EDGE network.
A reporter may either be a human being or a sensor (static, such as a
fixed camera, or mobile, as a robot).
A \emph{director} (either a human being, or analytic software)
assimilates these reports, and may perform some
actions based on the content of these combined reports.

Our setting is simplified in many ways.
For now, we consider a situation where reporters cooperate, and are
therefore benign: we leave a consideration of malicious reporting to
future work.
Similarly, we have implicitly assumed an always available 3G/EDGE
network, and have not considered network dynamics (such as the
availability of opportunistic WiFi networks).
We believe this assumption can be relaxed using techniques from our
prior work~\cite{Ra10}, 
but have left an exploration of this to future work.
Despite these simplifications, we show that the problem space has
sufficient richness in and of itself.

Each reporter reports on an \emph{event}.
The nature of the event depends upon the social swarming application:
for example, in a search and rescue operation, an event corresponds to
the sighting of an individual who needs to be rescued; in the balloon
hunt, an event is the sighting of a balloon.
Events occur at a particular \emph{location}, and multiple events may
occur concurrently either at the same location or at different
locations.

Reporters can transmit reports of an event using one of several
formats: such as a video clip, an audio clip, or a text message
describing what the report sees.
Each report is a form of \emph{evidence} for the existence of the
event.
As we discuss below, different forms of evidence are ``believed'' to
different extents.
In general, we assume that each reporter is capable of generating
$R$ different report formats, denoted by $f_j$, for $1 \le j \le R$.
However, different formats have different costs to the network: for
example, video or audio could consume significantly higher
transmission resources than, say, text.
We denote by $e_j$ the cost of a report $f_j$: for ease of
exposition, we assume that reports are a fixed size so that all
reports of a certain format have the same cost (our results can be
easily generalized to the case where report costs are proportional
to their length).

Finally, reporters can be mobile, but we assume that the director is
aware of the location of each reporter.
In our  problem formulation, we ignore the cost of sending periodic
location updates to the director.
In practice, this may be a reasonable assumption for three reasons.
First, the cost of location updates may be amortized over other
context aware applications that may be executing on
the smart phone.
Second, although this cost may be significant, it adds a fixed cost to
our formulations and does not affect the results we present in the
paper.
Finally, the absolute cost of the location updates themselves is
significantly less than the cost of video transmissions, for example.

Now, suppose that the director in a swarming application has heard,
through out of band channels or from a single reporter, of the
existence of an event $E$ at location $L$.
To verify this report, the director would like to request
\emph{corroborating} reports from other reporters in the vicinity of
$L$.
Which reporters should she get corroborating reports from?
What formats should those reporters use?

To understand this, recall that the goal of corroboration is to
increase the director's belief in the occurrence of the event.
How much should the director believe a specific reporter?
Or, equivalently, what is the \emph{credibility} of a report?

In general, this is a complex sociological and psychological question
which, at the moment, is not objectively quantifiable.
However, in this paper,  we model the credibility of the report using
two common intuitions about credibility.
The first intuition is based on the maxim ``seeing is believing'': a
video report is more credible than a text report.
We extend this maxim in our model to incorporate other formats, like
audio: audio is generally less credible than video (because, while
it gives some context about an event, video contains more context),
but more credible than text (for a similar reason).
Of course, this is an assumption: video and audio can be just as
easily doctored as text.
Recall that our model, for now, assumes cooperative non-malicious
elements: in future work, we plan to discuss how to model the
credibility of reports
in the presence of malicious
elements.
Moreover, as we shall show later, many of our results are insensitive
to the exact choice of the credibility model.

Our second intuition is based on the often heard statement ``I'll
believe someone who was there'', suggesting that proximity of the
reporter to an event increases the credibility of the report.
More precisely, a report $A$ generated by a reporter at distance $d_a$
from an event has a higher credibility than a report $B$ generated by
a reporter at a distance $d_b$, if $d_a < d_b$.
This is also a simplified model: the real world is more complex, since
the complexity of the terrain, or line of sight, may matter more than
geometric distance.

While are many different ways in which we can objectively quantify the
credibility of a report given these intuitions, we picked the
following formulation.
Let $S_i$ be the position of reporter $i$, $L$ be the position of
event $E$ and $c_{i,j}(S_i, L)$ be the credibility of the report
generated by reporter $i$ when report format $f_j$ is used.
We define $c_{i,j}(S_i, L)$ as:
\begin{eqnarray} \label{eq:text}
{c_{i,j}}({S_i},L) = \left\{ {\begin{array}{*{20}{c}}
   {{{\gamma _j}}/{d{{({S_i},L)}^{{\delta _j}}}},} & {\text{ if }{h_0} < d({S_i},L)}  \\
   {{{\gamma _j}}/{h_0^{{\delta _j}}},} & {\text{ if }d({S_i},L) \le {h_0}}  \\
\end{array}} \right.
\end{eqnarray}
with  $1\le j \le R$, ${\gamma_1} \le {\gamma_2} \le \cdots \le
{\gamma_R}$, and ${\delta _1} > {\delta _2} > \cdots > {\delta _R}$.
Here, $d(.)$ is the Euclidean distance between points, $h_0$ is a
certain minimum distance to avoid division by zero as well as to
bound the maximum credibility to a certain level, $\gamma_j$ is a
constant of proportionality implying the maximum achievable
credibility of report format $f_j$, and the credibility decays
according to a power-law with exponent $\delta_j$ when format $f_j$
is used.

Our credibility model incorporates the two intuitions described above
as follows.
The intuition about the credibility being dependent on proximity is
captured by the power-law decay with distance.
The intuition about the higher credibility of the video compared to
text is captured by having a larger $\gamma$ and
a smaller exponent for video.

This model can be extended to incorporate \emph{noise} or
\emph{confusion}.
For example, poor visibility or audible noise near a reporter may,
depending upon the format used, reduce the believability of a
report.
The intensity of point sources of noise can be modeled as a function
that decays with distance:
\begin{equation} \label{eq:noise}
G_1(S_i,O_1) = \frac{1}{{[1 + d{{(S_i,O_1)}]^{1/\sigma_1} }}}
\end{equation} where $S_i$ is the position of reporter $i$, $O_1$ is the
position of noise source 1, and $\sigma_1$ represents the strength
and effective range of noise source 1.
Then, if for reporter $i$ and event $E$, the original credibility
without noise is $c_{i,j}(S_i, L)$, then the credibility with $X$
noise sources should be
\begin{equation} \label{eq:cnoise}
c_{i,j}'(S_i, L) = c_{i,j}(S_i, L)\prod\nolimits_{p = 1}^X {\left(
{1 - G_p(S_i,O_p)} \right)}
\end{equation}
Noise sources effectively increase the distance of the reporter from
the event, reducing his or her credibility.
As we show later, our solutions can incorporate this form of noise
without any modification.

Although we have assigned objective quantitative values to
credibility, belief or disbelief is often qualitative and subjective.
Thus, we don't expect swarm directors to make decisions based on the
exact values of credibility of different reports, but rather to
operate in one of two modes: $a)$ ask the network to deliver
corroborating reports whose total credibility is above a certain
threshold, while minimizing cost, or $b)$ obtain as much corroborating
information that they can get from the network for a given cost.
We study these two formulations, respectively called \minC and
\maxC.

Before doing so, there are two questions to be answered: What is the
value of the credibility of a collection of corroborating reports?
What is the physical/intuitive meaning of a threshold on the
credibility?
For the first question, there are many possible answers and we
consider two.
With an \emph{additive corroboration function}, the  total credibility
is simply the sum of the individual credibilities.
More generally, with a \emph{monotonically-increasing corroboration
  function}, the total credibility increases monotonically as a
function of the sum of the individual credibilities.
The second question is important because it can help directors set
thresholds appropriately.
The intuition for a particular threshold value $C$ can be explained as
follows.
Suppose a director would be subjectively satisfied with 3
corroborating video clips from someone within 10m of an event.
One could translate this subjective specification into a threshold
value by simply taking the sum of the credibilities of 3 video reports
from a distance of 10m.

In the next two sections, we formally define \minC and \maxC, and then
consider two problem variants: a \emph{one-shot} problem which seeks
to optimize reporting for individual events, and a \emph{renewals}
problem which optimizes reporting over a sequence of event arrivals.

\section{The One-Shot Problem}
\label{sec:num}

In this section, we formally state the \minC and \maxC formulations
for the additive corroboration function and in the absence of noise,
discuss their complexity, develop optimal solutions for them, and then
explore an approximation algorithm that leverages the structure of the
credibility function for efficiency.
We conclude with a discussion of extensions to the formulations for
incorporating the impact of noise sources, and for
monotonically-increasing corroboration function.
Our exposition follows the notation developed in the previous section,
and summarized in Table~\ref{tab:notation}.

\begin{table}[t]
  \centering
  \scriptsize
  \caption{Notation}
  \setlength\doublerulesep{0pt}
  \setlength{\baselineskip}{1.5\baselineskip}
\begin{tabular}{|c||c|}
\hline
$N$ & the total number of available reporters\\
\hline
$c_{i,j}$ & the short form of (\ref{eq:text}) in a given event \\
\hline
$R$ & the total number of report formats  \\
\hline
$e_j$ & the cost when using report format $f_j$  \\
\hline
$C$ & the target credibility in \minC  \\
\hline
$A$ & the dynamic programming process of \minC \\
\hline
$B$ & the cost budget in \maxC  \\
\hline
$D$ & the dynamic programming process of \maxC \\
\hline
\end{tabular} \label{tab:notation}
\end{table}

\subsection{\minC and \maxC: Problem Formulation and Complexity}
\label{sec:complexity}


\subsubsection{Problem Formulations}

Recall that, in Section~\ref{sec:model}, we informally defined the
\minC problem to be: what is the minimum cost that guarantees total
credibility $C>0$?
\minC can be stated formally as an optimization problem:
\begin{eqnarray}
{\rm{Minimize:}}&&\sum_{i=1}^N\sum_{j=1}^R x_{i,j}e_{j} \label{eq:op10}\\
{\text{Subject to:}}&&\sum_{i=1}^N\sum_{j=1}^R x_{i,j}{c_{i,j}}\ge C\nonumber\\
{\rm{  }}&&{x_{i,j}} \in \{ 0,1\},\forall i\in \{1,...,N\}, \forall j\in \{1,...,R\}\nonumber\\
{\rm{  }}&&\sum_{j=1}^R x_{i,j} \leq 1, \forall i\in
\{1,...,N\}\nonumber
\end{eqnarray}
where $x_{i,j}$ is a binary variable that is $1$ if reporter $i$
uses format $f_j$, and $0$ otherwise.

Analogously, we can formulate \maxC (the maximum credibility that
can be achieved for a cost budget of $B>0$) as the following
optimization problem:
\begin{eqnarray}
{\rm{Maximize:}}&&\sum_{i=1}^N\sum_{j=1}^R x_{i,j}c_{i,j} \label{eq:op30}\\
{\text{Subject to:}}&&\sum_{i=1}^N\sum_{j=1}^R x_{i,j}{e_j}\le B\nonumber\\
{\rm{  }}&&{x_{i,j}} \in \{ 0,1\},\forall i\in \{1,...,N\}, \forall j\in \{1,...,R\}\nonumber\\
{\rm{  }}&&\sum_{j=1}^R x_{i,j} \leq 1, \forall i\in
\{1,...,N\}\nonumber
\end{eqnarray}

\subsubsection{On the Complexity of \minC and \maxC}
If, in the above formulation, the cost $e_j$ is also dependent on the
identity of the reporter (and therefore denoted by $e_{i,j}$), the
\maxC problem can be shown to be a special instance of the
Multiple-Choice Knapsack Problem (MCKP, \cite{knapsack}).
Moreover, the special case of one format (and $e_{i,j}=e_i$) is the
well-known Knapsack problem (KP) which is NP-hard.
However, when the cost is dependent only on the format
(i.e., $e_{i,j}=e_j$), we can state the following theorem, whose proof
(omitted for brevity)
uses a reduction from the original Knapsack problem.

\begin{theorem}
\minC and \maxC are NP-Hard. \label{the1}
\end{theorem}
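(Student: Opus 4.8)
The plan is to reduce the classical single-constraint Knapsack problem (which is NP-hard) to \maxC and to \minC in the regime $e_{i,j}=e_j$. Under this restriction \maxC is exactly a multiple-choice knapsack in which the weight of the $j$-th choice inside \emph{every} class is forced to equal $e_j$; to embed ordinary Knapsack I will use one reporter per Knapsack item and arrange that a reporter can earn credibility only through a single designated format whose cost encodes that item's weight.

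Concretely, start from a Knapsack instance with items $k=1,\dots,n$ of positive integer weight $\omega_k$, value $\pi_k$, capacity $W$, and threshold $V$, asking whether some $S\subseteq\{1,\dots,n\}$ satisfies $\sum_{k\in S}\omega_k\le W$ and $\sum_{k\in S}\pi_k\ge V$. I would build an instance with $N=R=n$, per-format cost $e_j\defequiv\omega_j$, credibilities $c_{i,j}\defequiv\pi_i$ if $j=i$ and $c_{i,j}\defequiv 0$ otherwise (replacing $0$ by an arbitrarily small $\epsilon>0$ if strictly positive entries are desired), and budget $B\defequiv W$. Since choosing a zero-credibility format only consumes budget, some optimal assignment has every reporter $i$ either using format $i$ (contributing $\pi_i$ to the objective and $\omega_i$ to the cost) or sending nothing; such assignments are in value- and cost-preserving bijection with the subsets $S$ satisfying $\sum_{k\in S}\omega_k\le W$. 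Hence the optimum of \maxC equals that of the Knapsack instance, so deciding whether it is $\ge V$ solves Knapsack, and \maxC is NP-hard. For \minC I reuse the gadget with target $C\defequiv V$: the minimum cost to reach total credibility $\ge V$ is $\le W$ iff the Knapsack decision instance is a yes-instance. (Alternatively, \minC reduces directly from Subset-Sum by taking $e_k=c_{k,k}=a_k$, all other $c_{k,j}=0$, and $C=t$, since the optimum then equals $t$ precisely when some subset of $\{a_1,\dots,a_n\}$ sums to $t$.) Note that $R=n$ is not a constant here, which is consistent with the strongly-polynomial algorithm the section develops when the number of formats is fixed.

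The one point requiring care — and the only real obstacle — is that the $c_{i,j}$ above are not literally of the power-law form~(\ref{eq:text}). I would resolve this by noting that Theorem~\ref{the1} is a statement about the optimization problems~(\ref{eq:op10}) and~(\ref{eq:op30}) with $c_{i,j}$ and $e_j$ as given nonnegative data, for which the monotonicity conventions on $\gamma_j,\delta_j$ are immaterial; and if an honest geometric realization is wanted, placing each reporter sufficiently far from the event makes its credibility vector concentrate, up to a negligible additive term, on a single format, and that term is absorbed into the $\epsilon$ slack, the counting argument being insensitive to it. Everything else is routine: the construction is polynomial in the binary encoding of $\omega_k,\pi_k,W,V$, so both reductions are valid and \minC and \maxC are NP-hard — in the weak sense, matching the weakly-polynomial dynamic programs developed later in this section.
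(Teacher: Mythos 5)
The paper omits its own proof of Theorem~\ref{the1}, saying only that it ``uses a reduction from the original Knapsack problem,'' and your diagonal gadget (one reporter and one format per item, $e_j=\omega_j$, $c_{i,i}=\pi_i$, off-diagonal credibilities $0$ or $\epsilon$) is exactly such a reduction, is correct for both \maxC and \minC, and rightly notes that $R$ must grow with the input, consistent with the strongly-polynomial min-cost-flow algorithms for fixed $R$. One caveat: your fallback ``geometric realization'' does not actually work, because under model~(\ref{eq:text}) with $\gamma_1\le\cdots\le\gamma_R$ and $\delta_1>\cdots>\delta_R$ every reporter at distance $d\ge\max(h_0,1)$ satisfies $c_{i,R}\ge c_{i,j}$ for all $j$ --- distant reporters all concentrate on the \emph{same} format $f_R$, so a diagonal credibility matrix is not realizable --- and the theorem must therefore be read, as your primary argument does, as a statement about the abstract programs~(\ref{eq:op10}) and~(\ref{eq:op30}) with the $c_{i,j}$ given as nonnegative data.
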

%

\subsection{Optimal Solutions}
\label{sec:opt}

Despite Theorem~\ref{the1}, it is instructive to consider optimal
solutions for the two problems for two reasons.
First, for many social swarming problem instances, the problem sizes
may be small enough that optimal solutions might apply.
Second, optimal solutions can be used to calibrate an approximation
algorithm that we discuss later.
In this section, we discuss two classes of optimal solutions for \minC
and \maxC, with different tradeoffs: one based on \emph{dynamic programming},
and another based on a \emph{min-cost flow formulation}.

\subsubsection{Dynamic Programming}

Since there exist optimal, weakly-polynomial algorithms for MCKP, it
is natural that similar algorithms exist for \minC and \maxC.
We describe these algorithms for completeness, since we use them in a
later evaluation.

For \minC (\ref{eq:op10}), we can write $y_{i,j} = 1 - x_{i,j}$,
where $y_{i,j} \in \{0, 1\}$, and then we have:
\begin{eqnarray}
 &&\sum\limits_{i = 1}^N {\sum\limits_{j = 1}^R {{e_j}} }  - {\rm{Maximize  }}\sum\limits_{i = 1}^N {\sum\limits_{j = 1}^R {{y_{i,j}}{e_j}} }  \label{eq:op20}\\
 {\text{Subject to:}}&&\sum\limits_{i = 1}^N {\sum\limits_{j = 1}^R {{y_{i,j}}{c_{i,j}}} }  \le \sum\limits_{i = 1}^N {\sum\limits_{j = 1}^R {{c_{i,j}}} }  - C = W \nonumber\\
 &&{y_{i,j}} \in \{ 0,1\} ,\forall i \in \left\{ {1, \ldots ,N} \right\}{\rm{,}}\forall j \in \left\{ {1, \ldots ,R} \right\} \nonumber\\
 &&\sum\limits_{j = 1}^R {{y_{i,j}}}  \ge R - 1,\forall i \in \left\{ {1, \ldots ,N} \right\} \nonumber
\end{eqnarray} where the minimization problem (4) has been transformed
into a maximization problem, and the notation in (6) emphasizes that
the first term in the total cost $\sum_{i=1}^N\sum_{j=1}^R e_j$ does
not depend on the $y_{i,j}$ variables to be optimized.
For a given event, the sum of the $c_{i,j}$ values is a constant, and
so $W$ is also a constant.

This optimization problem can be solved by a dynamic programming
approach if we assume all $c_{i,j}$s are truncated to a certain
decimal precision, so that $c_{i,j} \in \{0, \zeta, 2\zeta,
\ldots\}$ where $\zeta$ is a discretization unit.
Then for any binary $y_{i,j}$ values that meet the constraints of
the above problem, the sum $\sum_{i=1}^N\sum_{j=1}^R y_{i,j}c_{i,j}$
takes values in a set $\script{W} \defequiv \{0, \zeta, 2\zeta,
\ldots, W\}$.
Note that the cardinality $|\script{W}|$ depends on $N$, $R$, the
$c_{i,j}$ values, and the discretization unit $\zeta$.
Now define $A(l, s)$ as the sub-problem of selecting reporters in the
set $\{1, \ldots, l\}$ subject to a constraint $s$.
Assuming $A(l,s)$ values are known for a particular $l$, we
recursively compute $A(l+1,s)$ for all $s \in \script{W}$ by:
\begin{eqnarray}
A(l+1, s) = \max[\phi^{(0)}(l, s), \phi^{(1)}(l,s), \ldots,
\phi^{(R)}(l,s)] \label{eq:dy10}
\end{eqnarray}
where $\phi^{(k)}(l,s)$ is defined for $k \in \{0, 1, \ldots, R\}$:
\begin{eqnarray*}
& \phi^{(k)}(l,s) \defequiv A(l, s - \sum_{j=1, j\neq k}^R c_{l,j})
+ \sum_{j=1,j\neq k}^R e_j &
\end{eqnarray*} This can be understood as follows: The value
$\phi^{(k)}(l,s)$ is the cost associated with reporter $l+1$ using
option $k \in \{0, 1, \ldots, R\}$ and then allocating reporters $\{1,
\ldots, l\}$ according to the optimal solution $A(l, s - \sum_{j=1, j
\neq k}^{R} c_{l,j})$ that corresponds to a smaller budget.
Note that option $k \in \{1, \ldots, R\}$ corresponds to reporter
$l+1$ using a particular format (so that $y_{l+1,k}=0$ for option $k$
and $y_{l+1,m} = 1$ for all $m \neq k$), and option $k=0$ corresponds
to reporter $l+1$ remaining idle (so that $y_{l+1,m} = 1$ for all
$m$).
The time complexity of this dynamic programming algorithm, called
\minCDP, is O($NR|\script{W}|$).
%
%
%
%

Similarly, \maxC can be solved using dynamic programming, yielding
an algorithm we label \maxCDP:
\begin{eqnarray}
&&D(l + 1,s) = \max [ {D(l,s),\mu^{(1)}(l,s),\mu^{(2)}(l,s),
\ldots ,\mu^{(R)}(l,s)}], \nonumber\\
&&\mu^{(k)}(l,s) \defequiv D(l,s - {e_k}) + {c_{l,k}} \text{ for } k
\in \{1, \ldots, R\} \label{eq:dy20}
\end{eqnarray}
with complexity $O(NR|\script{B}|)$. $\script{B} \defequiv \{0,
\eta, 2\eta, \ldots, B\}$, where $\eta$ is a discretization unit.



\subsubsection{Min-Cost Flow}
For a fixed number of formats, it is possible to define
strongly-polynomial, but not necessarily efficient, optimal algorithms
for \minC and \maxC.
These solutions are based on minimum-cost flow algorithms~\cite{clr}.
Define $\alpha_j$ to be the number of reporters reporting with
format $f_j$.
Define a report vector to be $(\alpha_1,\alpha_2,...,\alpha_R)$ and
an $(\alpha_1,\alpha_2,...,\alpha_R)$-assignment to be an assignment
of formats to reporters with $\alpha_j$ reporters reporting with
format $f_j$ for each $j\in\{1,..,R\}$.
We shall find an $(\alpha_1,\alpha_2,...,\alpha_R)$-assignment of
formats to reporters of maximum credibility.

We shall do so by transforming this problem to a min-cost flow
problem and applying a min-cost flow algorithm to obtain the
assignment of maximum credibility, in the following manner.
 %
 Assign nodes for each of the $N$ reporters and each of the $R$
 formats.
 Form a complete bipartite graph between the reporters and formats.
 Assign the edge connecting reporter $i$ to format $f_j$ with max
 capacity 1, min capacity 0, and cost $\max_{k\in
 \{1,...,N\}}\max_{l\in\{1,...,R\}} c_{k,l}-c_{i,j}$.
 Note that minimizing the set of such costs maximizes the set of
 $\{c_{i,j}\}$.
 Also, create a source node and a sink node.
 Connect the source node to each of the reporter nodes and give each
 edge max capacity 1, min capacity 0, and cost 0.
 Connect the sink node to each of the format nodes and give the edge
 connecting to format $f_j$ max capacity $\alpha_j$, min capacity
 $\alpha_j$, and cost 0.
 We shall call this network the \emph{credibility network}.
 This network has $O(N+R)$ vertices and $O(NR)$ edges.

This construction ensures that applying a min-cost flow algorithm to
the credibility network gives a minimum cost and maximum credibility
$(\alpha_1,...,\alpha_R)$-assignment of formats to reporters.
Using this construction, it is fairly easy to define an optimal
algorithm for \maxC, that we call \maxCMCF.
In this algorithm, we simply enumerate all possible
$(\alpha_1,...,\alpha_R)$-assignments, and find the highest total
credibility assignment that satisfies the cost budget $B$.
In a similar way, one can define \minCMCF.









There are $O(N^{R-1})$ possible report vectors.
The Enhanced Capacity Scaling algorithm~\cite{clr} solves the minimum
cost flow problem in time $O(|E|\log |V|(|E|+|V|\log |V|))$.
Thus, the Enhanced Capacity Scaling algorithm runs in time $O(NR\log
(N+R)(NR+((N+R)\log(N+R))))$ on the credibility network.
This leads to the following lemma:
\begin{lemma}
Both \minCMCF and \maxCMCF run in time $O(N^RR\log
(N+R)(NR+((N+R)\log(N+R))))$.
\end{lemma}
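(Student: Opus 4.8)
The plan is to bound the total running time as the product of the number of report vectors enumerated and the cost of processing a single report vector, and then simplify using the graph parameters of the credibility network.

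First I would fix a report vector $(\alpha_1,\ldots,\alpha_R)$ with nonnegative integer entries and $\sum_{j}\alpha_j \le N$, and recall from the construction above that running a min-cost flow algorithm on the credibility network for this vector returns a minimum-cost --- and hence, by the choice of edge costs $\max_{k,l} c_{k,l}-c_{i,j}\ge 0$, maximum-credibility --- $(\alpha_1,\ldots,\alpha_R)$-assignment, or certifies infeasibility. The credibility network has $O(N+R)$ vertices and $O(NR)$ edges, and the lower bounds $\alpha_j$ on the sink edges are handled by the standard reduction of min-cost flow with lower bounds to ordinary min-cost flow, which does not change these asymptotic sizes. Substituting $|V| = O(N+R)$ and $|E| = O(NR)$ into the Enhanced Capacity Scaling bound $O(|E|\log|V|(|E|+|V|\log|V|))$ gives a per-vector flow cost of $O\big(NR\log(N+R)(NR+(N+R)\log(N+R))\big)$. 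The only other per-vector work --- evaluating the fixed cost $\sum_j \alpha_j e_j$ and testing it against the budget $B$ for \maxCMCF, or testing the returned credibility against the target $C$ for \minCMCF, and updating the incumbent best solution --- is $O(R)$ and is dominated by the flow cost.

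Next I would use the fact, noted above, that there are $O(N^{R-1})$ report vectors. Since both \maxCMCF and \minCMCF iterate over all of them, performing one min-cost flow computation (plus $O(R)$ bookkeeping) per vector, the total running time is
\[
O(N^{R-1}) \cdot O\big(NR\log(N+R)(NR+(N+R)\log(N+R))\big) = O\big(N^{R}R\log(N+R)(NR+(N+R)\log(N+R))\big),
\]
which is the claimed bound. Correctness of the outer enumeration --- that the best feasible assignment found over all report vectors is globally optimal for \minC (resp.\ \maxC) --- follows because every feasible choice of the $x_{i,j}$ induces a report vector and the flow call for that vector returns the best assignment consistent with it; this justifies that the algorithms compute what they claim, but does not affect the running time.

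The substitution of graph sizes into the Enhanced Capacity Scaling bound and the final multiplication are routine. The one place that needs a little care is verifying that reducing away the lower-bound capacities on the sink edges keeps $|V|$ at $O(N+R)$ and $|E|$ at $O(NR)$, so that the quoted flow bound applies unchanged; once that is in hand, the lemma follows immediately.
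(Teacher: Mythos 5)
Your proposal is correct and follows essentially the same argument the paper gives: multiply the $O(N^{R-1})$ report vectors by the per-vector Enhanced Capacity Scaling cost obtained by substituting $|V|=O(N+R)$ and $|E|=O(NR)$ into $O(|E|\log|V|(|E|+|V|\log|V|))$. The extra care you take with the lower-bound capacities and the correctness of the enumeration is sensible but goes beyond what the paper records.
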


Note that when the number of formats $R$ is fixed, these algorithms
are polynomial in $N$.
In addition, when $|\script{B}|,|\script{W}|=\omega(N^{R-1}\log
(N+R)(NR+((N+R)\log(N+R))))$ these algorithms have lower asymptotic
complexity than their dynamic programming equivalents.

\subsection{Leveraging the Structure of the Credibility Function}
\label{sec:decentralized}

The solutions discussed so far do not leverage any \emph{structure}
in the problem.
Given an event and reporter locations, the credibility associated with
each report format is computed as a number and acts as an input to the
algorithms discussed.
However, there are two interesting structural properties in the
problem formulation.
First, for a given reporter at a given location, the credibility is
higher for a format whose cost is also higher.
Second, for reporters at different distances, the credibility decays
as a function of distance.
In this section, we ask the question: can we leverage this structure
to devise efficient approximation algorithms, or optimal
special-case solutions either for \maxC or \minC?

\subsubsection{An Efficient Optimal Greedy \maxC Algorithm for Two Formats}

When a social swarming application only uses two report formats (say,
text and video), it is possible to devise an optimal greedy \maxC
algorithm.
Assume each of the $N$ reporters can report with one of two formats,
$f_1$ or $f_2$, that reporters are indexed so that reporter $i$ is
closer to the event than reporter $k$, for $i<k$, and that credibility
decays with distance.
Furthermore, we assume that $e_{1}=\beta>1$ and $e_{2}=1$ and that
$c_{i,1}\ge c_{i,2}$ $\forall i\in \{1,...,N\}$: that the more
expensive format yields a higher credibility.

With these assumptions, the following algorithm, denoted \maxCTwoF,
finds an assignment with maximum credibility that falls within a
budget $B$ and runs in time $O(N^2)$.

\begin{algorithm}
\caption{Algorithm \maxCTwoF} \label{Alg1} \small $\textbf{INPUT}$:
$(c_{i,j})$: $i\in\{1,..,N\}, j\in\{1,2\}$; $(1,\beta)$; Budget $B$

Define $d_m \defequiv c_{m,1} - c_{m,2}$ for each $m \in \{1, \ldots, N\}$. \\
For $i \in \{0, \ldots, \min[\lfloor B/\beta \rfloor, N]\}$, do:
\begin{enumerate}
 \item Define $Y \defequiv \min[N-i, \lfloor B - \beta i \rfloor]$.
 \item Define the \emph{active set} $\script{A} \defequiv \{1, \dots,
   i + Y\}$, being the set of $i+Y$ reporters closest to the event.
 \item Define $\script{D}^*$ as the set of $i$ reporters in
   $\script{A}$ with the largest $d_m$ values (breaking ties
   arbitrarily).  Then choose format $f_1$ for all reporters $m \in
   \script{D}^*$, choose $f_2$ for $m \in \script{A} - \script{D}^*$,
   and choose ``idle'' for all $m \notin \script{A}$.
 \item Define $C_{MAX}^i$ as the total credibility of this assignment:
   \begin{eqnarray*}
    &C_{MAX}^i \defequiv \sum_{m\in\script{A}} c_{m,2} + \sum_{m\in\script{D}^*} d_m&
   \end{eqnarray*}
\end{enumerate}
$\textbf{OUTPUT}$:  $i^* \defequiv \arg\max_{i} C_{MAX}^i$.
\end{algorithm}

The output of this algorithm is the maximum credibility assignment of
formats to reporters.
We can prove that this algorithm is optimal.

\begin{thm}
  The above algorithm finds the solution $C_{MAX}$ to \maxCTwoF
  problem with budget $B$.
\end{thm}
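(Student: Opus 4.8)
The plan is to prove optimality by an exchange (swapping) argument that decouples three independent choices: how many reporters use the expensive format $f_1$, which reporters are activated at all, and which of the activated reporters get $f_1$. First I would argue a normalization: since every credibility $c_{m,j}$ is strictly positive, given that exactly $i$ reporters use $f_1$, an optimal solution may be taken to use $Y \defequiv \min[N-i, \lfloor B - \beta i\rfloor]$ reporters with $f_2$ — turning an otherwise idle reporter into an $f_2$-reporter never decreases the objective and stays within budget as long as one can afford it, so we may push the $f_2$-count up to $Y$. Feasibility forces $\beta i \le B$ and $i \le N$, i.e. $0 \le i \le \min[\lfloor B/\beta\rfloor, N]$, which is exactly the index range enumerated by \maxCTwoF. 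Hence it suffices to show, for each such $i$, that the value $C_{MAX}^i$ computed in step~4 equals the maximum credibility over all feasible assignments that use exactly $i$ reporters with $f_1$ and $Y$ with $f_2$; the global optimum is then $\max_i C_{MAX}^i$, since every $C_{MAX}^i$ is realized by a feasible assignment and an arbitrary optimal solution uses some admissible count $i^{\circ}$ of $f_1$-reporters and so has credibility at most $C_{MAX}^{i^{\circ}}$.

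Next, fix $i$ and the corresponding $Y$, and consider any \emph{active set} $S$ of $i+Y$ reporters together with a subset $T \subseteq S$, $|T| = i$, that receives format $f_1$ (the rest of $S$ receiving $f_2$, everyone else idle). The total credibility rewrites as $\sum_{m\in S} c_{m,2} + \sum_{m\in T} d_m$, with $d_m = c_{m,1} - c_{m,2} \ge 0$ by the standing assumption $c_{m,1}\ge c_{m,2}$. For a fixed $S$ this is maximized by taking $T$ to be the $i$ elements of $S$ with the largest $d_m$ values — precisely $\script{D}^*$ in the algorithm. So the only remaining question is which active set $S$ is optimal, and I claim it may be taken to be the prefix $\{1,\dots,i+Y\}$ of the $i+Y$ reporters closest to the event, i.e. $\script{A}$.

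To prove that claim I would use exchange: suppose an optimal active set $S$ is not this prefix; then there exist $p\in S$ and $q\notin S$ with $q<p$, so $q$ is at least as close to the event as $p$. Replace $p$ by $q$ and let $q$ inherit whatever format $p$ held in the optimal assignment on $S$. Because credibility decays with distance, $c_{q,1}\ge c_{p,1}$ and $c_{q,2}\ge c_{p,2}$, so the objective does not decrease, and both counts $i$ and $Y$ are preserved; moreover the sum of indices of active reporters strictly decreases, so after finitely many such swaps the active set becomes $\{1,\dots,i+Y\}$ without having lowered the credibility. Re-optimizing $T$ on this prefix (picking $\script{D}^*$) then yields exactly $C_{MAX}^i = \sum_{m\in\script{A}} c_{m,2} + \sum_{m\in\script{D}^*} d_m$, which establishes the per-$i$ optimality claim.

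Finally I would assemble the pieces: $\max_i C_{MAX}^i$ is attained by the feasible assignment the algorithm outputs at $i = i^*$, and no feasible assignment can beat it, so $\max_i C_{MAX}^i = C_{MAX}$. The main obstacle is the active-set exchange step — specifically, checking that after each swap one can re-use (rather than re-solve) a valid, budget-feasible format assignment whose credibility has not dropped; the monotonicity $c_{i,j}\ge c_{k,j}$ for $i<k$ in \emph{both} formats is exactly the hypothesis that makes this go through. Everything else is routine bookkeeping on the budget arithmetic ($\beta i + Y \le B$ and $i+Y\le N$) and on the reduction to the two-term expression $\sum_{m\in S} c_{m,2} + \sum_{m\in T} d_m$.
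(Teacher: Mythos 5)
Your proposal is correct and follows essentially the same route as the paper's proof: enumerate the number $i$ of $f_1$-reporters, use an interchange argument (relying on credibility being non-negative and non-increasing in distance) to show the active set may be taken to be the $i+Y$ closest reporters, and then maximize $\sum_{m\in S}c_{m,2}+\sum_{m\in T}d_m$ over $T$ by picking the $i$ largest $d_m$ values. You simply fill in details the paper leaves implicit (the justification that the $f_2$-count can be pushed up to $Y$, the budget arithmetic, and the termination of the swapping process), so no further comparison is needed.
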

\begin{proof} For each $i$, we first seek to find $C_{MAX}^i$, the
maximum credibility subject to having exactly $i$ reporters use the
expensive format $f_1$.
Using a simple interchange argument together with the fact that
credibility of each format is non-negative and non-increasing in
distance, we can show that there exists an optimal solution that
activates the set $\script{A}$ that consists of $i+Y$ reporters
closest to the event.
Indeed, if an optimal solution does not use the set $\script{A}$, we
can swap an idle reporter closer to the event with an active reporter
further from the event, without affecting cost or decreasing
credibility.

For each subset $\script{D} \subseteq\script{A}$ that contains $i$
reporters, define $C(\script{D})$ as the credibility of the
assignment that assigns reporters $m \in \script{D}$ the format
$f_1$, assigns the remaining reporters in $\script{A}$ the format
$f_2$, and keeps all reporters $m \notin \script{A}$ idle:
\begin{eqnarray*}
&C(\script{D}) = \sum_{m\in \script{A}} c_{m,2} + \sum_{m \in
\script{D}} d_m&
\end{eqnarray*}
Then $C(\script{D})$ is maximized by the subset $\script{D}^*$
containing the $i$ reporters in $\script{A}$ with the largest $d_m$
values.
This defines $C_{MAX}^i$, and $C_{MAX}$ is found by maximizing over
all possible $i$.
\end{proof}

We can analogously define a \minC version for two formats, but omit
it for brevity.
Currently, we have not been able to extend this type of algorithm to
3 formats and beyond, so this remains an interesting open problem.

\subsubsection{A Computationally-Efficient Approximation Algorithm}
\label{sec:ring_based}

The structure of our credibility function can also be used to reduce
computational complexity.
To understand this, recall that the dynamic programming algorithms
described above jointly optimized both reporter selection and format
selection.
In this section, we describe an approximation algorithm for \minC,
called \minCAnn, where the structure of the credibility function is
used to determine, for each reporter, the format that the reporter
should use.
As we shall show, \minCAnn has significantly lower run-times at the
expense of slight non-optimality in its results.

\minCAnn is based on the following intuition.
Close to the location of the event, even low-cost formats have
reasonable credibility.
However, beyond a certain distance, the credibility of low-cost
formats like text degrades significantly, to the point where even the
small cost of that format may not be justified.
Put another way, it is beneficial for a reporter to use that format
whose \emph{c}redibility per unit \emph{c}ost (hence \minCAnn) is
highest --- this gives the most ``bang for the buck''.
Thus, for a given reporter, its current distance $d$ from the location
of the event may pre-determine the format it uses.
Of course, this pre-determination can result in a non-optimal choice,
which is why \minCAnn is an approximation algorithm.

Formally, in \minCAnn, if, for a reporter $i$:
\begin{equation}
k^* = {{\arg\max}_k}\left[ {\frac{{{c_{i,k}}({S_i},L)}}{{{e_k}}}}
\right] \nonumber
\end{equation}
then reporter $i$ chooses format $f_{k^*}$.
This choice can be pre-computed (since it depends only upon the
credibility and cost models), but each reporter needs to recalculate
its choice of the report format whenever its relative distance to
the concerning event changes. The event locations that determine the
format $f_{k^*}$ chosen by a particular reporter $i$ form annular
regions about the reporter.

Once each reporter has made the format choice, it remains for the
director to decide which reporter(s) to select.
For \minCAnn, the minimum cost formulation is identical to
(\ref{eq:op20}), and with comparable complexity, but with two
crucial differences: both the constant $|\script{W}|$ and the
runtime now relate only to the number $N$ of reporters, not to $N
\times R$.
As we shall show below, this makes a significant practical difference
in runtime, even for moderate-sized inputs.
In \minCAnn, the dynamic programming process of (\ref{eq:dy10}) is
replaced by
\begin{equation}
{A(l + 1,s)} = \max \left\{ {{A(l,s)},{c_l} + {A(l,s - {e_l})}}
\right\} \label{eq:dy}
\end{equation}
where $c_l$ replaces $c_{l,j}$ in (\ref{eq:dy10}), since each reporter
precomputes its format of choice.
Compared with (\ref{eq:dy10}), the time complexity of (\ref{eq:dy})
is reduced to $O(N|\script{W}|)$ with a much smaller $|\script{W}|$
in general.
Notice that this time complexity is independent of $R$, the number
of report formats, greatly improving its computational efficiency at
the expense of some optimality.



Using steps similar to that presented in Section~\ref{sec:opt}, it is
possible to define a \maxCAnn approximation algorithm for maximizing
credibility.
We omit the details for brevity, but indicate that \minCAnn and
\maxCAnn still have weakly-polynomial asymptotic complexity, but are
computationally much more efficient than \minCDP and \maxCDP.




\vspace*{1ex}\noindent\textbf{Evaluation of \minCAnn.}
The approximation algorithms discussed above trade-off
optimality for reduced computational complexity.
As such, it is important to quantify this trade-off for practical
swarm configurations.
In this section, we compare \minCAnn with \minCDP\footnote{In some of
  our evaluations, we use $R=4$. In this regime, \minCDP is more
  efficient than \minCMCF, hence the choice.}: lack of space
prevents us from a more extensive evaluation, but we expect our
conclusions to hold in general.

Lacking data from social swarming applications, we use two different
data sets.
First, we carefully\footnote{We re-scaled reporter distances and did
several data cleaning operations:
  removing blog posts, handling duplicate reports etc. We omit a discussion of
  these for brevity.} manually mine Google News for interesting
events.
Searching for a specific set of keywords describing an event in Google
News retrieves a list of news items related to that event within 24
hours of occurrence of that event.
The event location is explicitly specified in the news items.
Each news item has a location, which is assumed to be the location of
a reporter.
We use the event location and report location as inputs to \minCAnn
and \minCDP.
In this paper, we present results from three events: an event of
\emph{regional} scope, a basketball playoff game between the Lakers
and the Jazz (31 reporters); an event of \emph{national} scope, the
passage of the healthcare reform bill (63 reporters); and an event of
\emph{global} scope, the opening of the Shanghai exposition (88
reporters).
Of course, this choice of a surrogate for social swarming is far from
perfect.
However, this data set gives a varied, realistic reporter location
distribution; since our algorithms depend heavily on location, we can
draw some reasonable conclusions about their relative performance.
That said, we also use a dataset generated from a random distribution
of reporters to ensure that we are not misled by the Google News data
set, but also to explore the impact of larger swarm sizes.

We are interested in two metrics: the \emph{optimality gap}, which
is the ratio of the min-cost obtained by \minCAnn to that obtained
by \minCDP; and the \emph{runtime} of the computation for each of
these algorithms.

\begin{figure}[!t]
\centering {
    \subfigure[Optimality gap]{\includegraphics[width=1.72in]{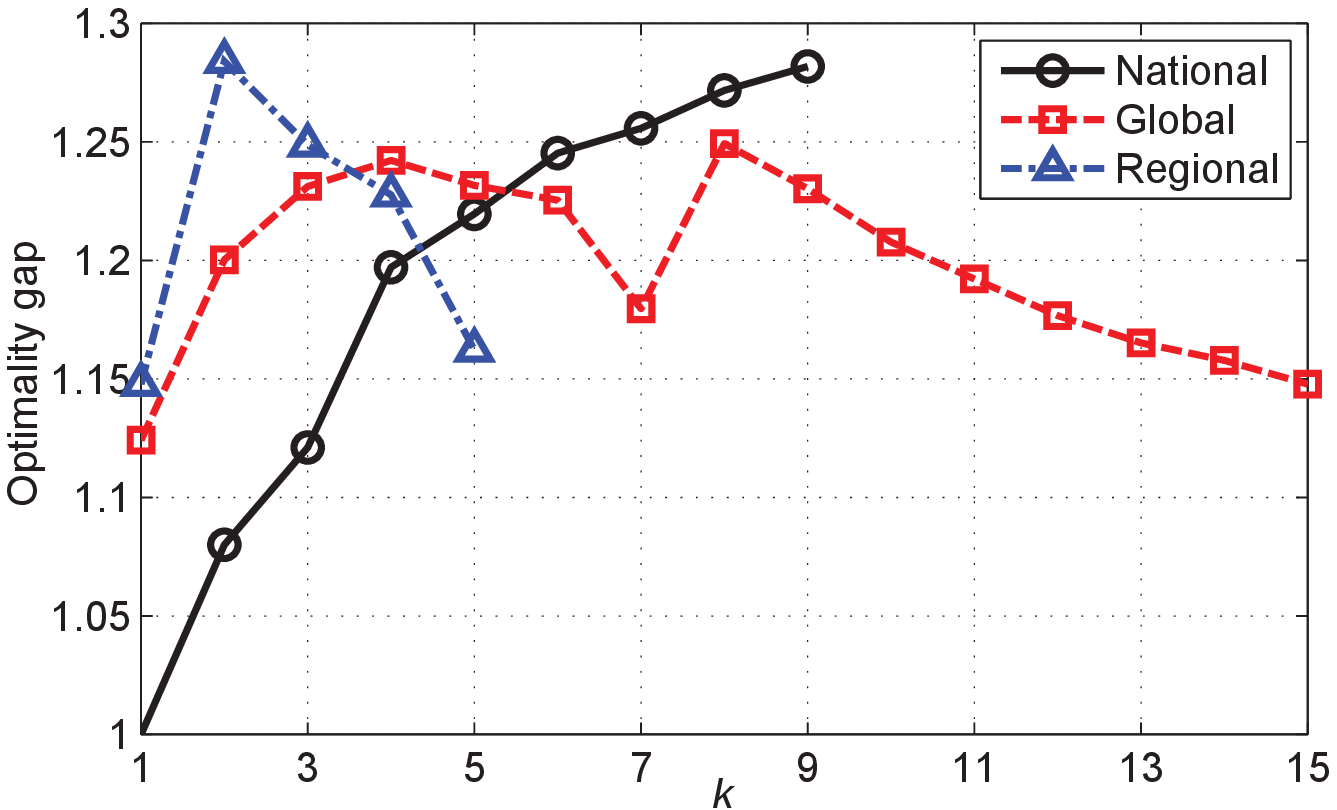}\label{fig:new5a}}
    \hfil
    \subfigure[Runtime (in microseconds)]{\includegraphics[width=1.67in]{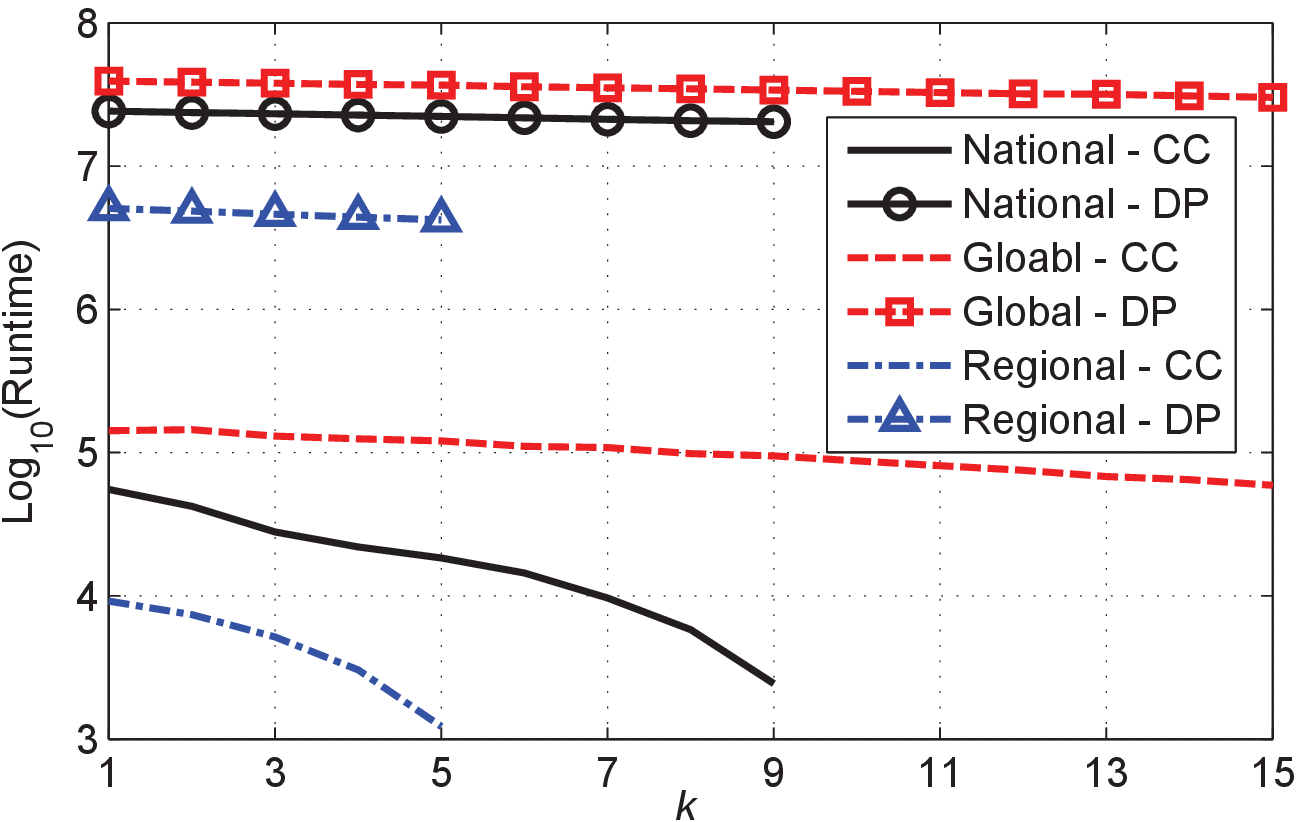}\label{fig:new5b}}
} \caption{Minimal cost of 4 formats with increasing $k$}
\label{fig:new5}
\end{figure}

Figure~\ref{fig:new5} plots these two metrics as a function of the
credibility threshold, expressed as a number $k$.
A value $k$ represents a credibility threshold corresponding to the
total credibility of $k$ reports of the highest cost format from a
distance $h_0$ (e.g., if $k$ is 3 and the highest cost format is
video, then the director is interested in obtaining credibility
equivalent to that from three video reports).
In this graph, we use four data formats with $h_0$, $\gamma_{1-4}$,
$\delta_{1-4}$ and the corresponding $e_{1-4}$ setting to 1, (1, 1,
1, 1), (2, 1.5, 1, 0.5) and (1, 2.2, 5.4, 13.7) respectively.
\emph{We have evaluated different numbers of data formats and
  different parameter settings and have obtained qualitatively similar
  results,} but omit a discussion of these for lack of space.

From Figure~\ref{fig:new5a}, the optimality gap is, on average
19.7\%, across different values of $k$.
This is encouraging, since it suggests that \minCAnn produces results
that are not significantly far from the optimal.
Interestingly, no optimal solution exists for $k>5$ for the regional
event: this credibility threshold  experiences a ``saturation'', since
there  does not exist a set of reporters who can collectively satisfy
that threshold.
Other events saturate at different values of $k$.
Finally, while this is not apparent from these graphs, the minimum
cost solution is approximately linear in $k$ for \minCAnn and
\minCDP.

More interestingly, from Figure~\ref{fig:new5b}, it is clear that
the runtime of \minCAnn is \emph{2-3 orders of magnitude} lower than
that of \minCDP with the discretization setting $|\script{W}| =
1000W$.
This difference is not just a matter of degree, but may make the
difference between a useful application and one that is not useful:
\minCDP can take several \emph{tens of seconds} to complete while
\minCAnn takes at most a few hundred milliseconds, which might make
the difference between victory and defeat in a balloon hunt, or life
and death in a disaster response swarm!
The explanation for the performance difference is the lower asymptotic
complexity of \minCAnn.
A subtle finding is that the running time of both \minCAnn and \minCDP
decreases, sometimes dramatically in the case of \minCAnn, with
increasing $k$.
Intuitively, this is because there are fewer candidate sets of
reporters who can satisfy a higher credibility, resulting in a smaller
search space.

\begin{figure}[!t]
\centering {
    \subfigure[Optimality gap]{\includegraphics[width=1.72in]{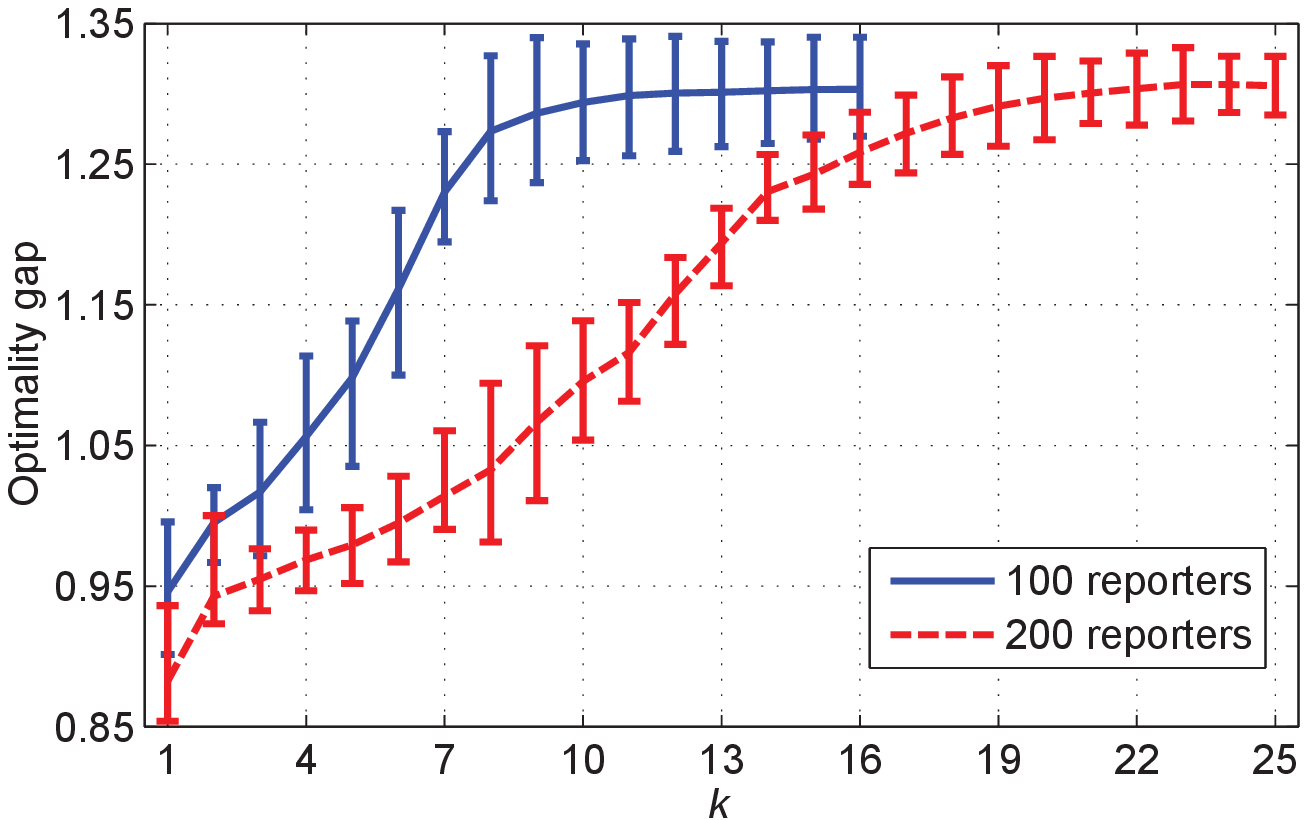}\label{fig:new6a}}
    \hfil
    \subfigure[Runtime (in microseconds)]{\includegraphics[width=1.68in]{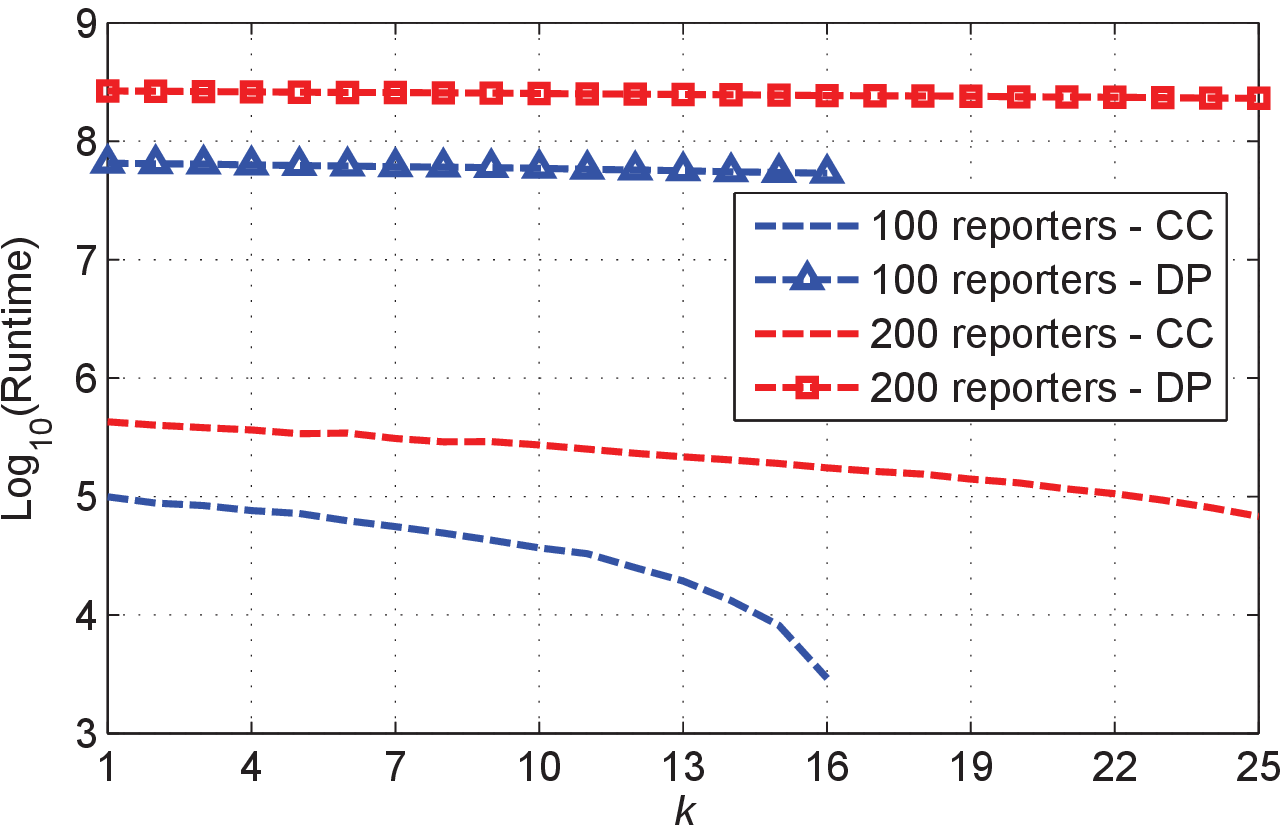}\label{fig:new6b}}
} \caption{Minimal cost in random topologies with increasing $k$
(error bars are very small thus ignored in (b))} \label{fig:new6}
\end{figure}

For random topologies, Figure~\ref{fig:new6} plots the optimality gap
and runtime, averaged over 50 simulations.
\minCAnn is, on average 20.5\% and 17.4\% for 100 and 200 reporters,
off the optimal for different values of $k$, but is still 2-3 orders of
magnitude more efficient than \minCDP.
The runtimes for both algorithms are slightly higher, given the
larger number of reporters.
Moreover, with 100 or 200 reporters, the optimality gap has the same
upper bound, about 35\% for large $k$.
This is also observed in other simulations for report numbers of 50,
150 and 300 (not shown).
We have left an analytical exploration of this upper bound to future work.
Finally, a comparison of these results with Figure~\ref{fig:new5a}
reveals an interesting result.
Although different types of reporter deployments can result in
different optimality gap curves (the curves for the three different
types of Google News in Figure~\ref{fig:new5a} are not the same), the
national event seems a qualitatively similar optimality gap curve as
the random topologies, suggesting that its deployment is similar to
that event.
Understanding this in greater depth is also left to future work.

\subsection{Extensions}
\label{sec:extensions}

Incorporating sources of noise into our algorithms is
straightforward, so we will mention this briefly.
Recall that the way we model a noise source increases a reporter's
effective distance.
Since our optimal algorithms, like \minCDP or \minCMCF, are agnostic
to the structure of the credibility function, they are unaffected by
noise.
For an algorithm like \minCAnn, which does take structure into
account, recall that noise sources increase a reporter's effective
distance.
Since reporters can quantify ambient noise, they can each use the
effective distance to calculate the report format to use.

Finally, our algorithms can, in general, deal with monotonically
increasing corroboration functions where the total credibility of a
collection of reporters may be a non-linear function of the individual
credibilities.
If $I(.)$ were to represent a monotonically increasing credibility
function, we only need use $I(c)$ to replace $c$ in our dynamic
programming formulation.
For example, (\ref{eq:dy}) would become
\begin{equation}
{A(l + 1,s)} = \max \left\{ {{A(l,s)},{c_l} + {A(l,I(s - {e_l}))}}
\right\} \label{eq:dym} \nonumber
\end{equation}
Similar changes can be applied to other dynamic programming formulations.
%


\section{The Renewals Problem: Randomly Arriving Events}
\label{sec:renewals}

In the previous section, we discussed a one-shot problem: that of
optimizing a single event.
We now consider a sequence of events that arrive at times $\{t_1, t_2,
t_3, \ldots\}$, where $t_k$ is a real number that represents the
arrival time of event $k$.
We assume that $t_k < t_{k+1}$ for all $k$.
In this setting, we consider a stochastic variant of \maxC, called
\maxCS: Instead of maximizing credibility for a single event subject
to a cost constraint, we maximize the average credibility-per-event
subject to an average cost constraint and a per-event credibility
minimum.
This couples the decisions needed for each event.
However, we first show that this time average problem can be solved by
a reduction to individual knapsack problems of the type described in
previous sections.
We then show that if the per-event credibility minimum is removed,
then \emph{decisions can be made in a decentralized fashion}.
Specifically, after the processing of every event, the swarm director
passes a weight to all reporters.
The reporters then make uncoordinated decisions when processing the
next event, without any intervention from the swarm director.
We derive a similar distributed version for stochastic $\minC$,
labelled $\minCS$.

We start by solving the general time average problem using Lyapunov
optimization \cite{now}, which can handle $\maxCS$, $\minCS$,
as well as variations with more general constraints.

\subsection{The  General Stochastic Problem}
\label{sec:renewals:general}

Let $\omega[k]$ represent a random vector of parameters associated
with each event $k$, such as the location of the event and the
corresponding costs and credibilities.
While $\omega[k]$ can include different parameters for different
types of problems, we shall soon use $\omega[k] \defequiv
[(c_{i,j}[k]), (e_{j}[k])]$, where $(c_{i,j}[k])$ is the matrix of
event-$k$ credibility values for reporters $i \in \{1, \ldots, N\}$
and formats $f_j \in \{f_1, \ldots, f_R\}$, and $(e_j[k])$ is a
vector of cost information.
We assume the process $\omega[k]$ is \emph{ergodic} with a well
defined steady-state distribution.
The simplest example is when $\omega[k]$ is independent and
identically distributed (i.i.d.)
over events $k \in \{1, 2, 3, \ldots\}$.

Let frame $k$ denote the period of time $[t_k, t_{k+1})$ which starts
with the arrival of event $k$ and ends just before the next event.
For every frame $k$, the director observes $\omega[k]$ and chooses a
\emph{control action} $\alpha[k]$ from a general set of feasible
actions $\script{A}_{\omega[k]}$ that possibly depend on $\omega[k]$.
The values $\omega[k]$ and $\alpha[k]$ together determine an $M+1$
dimensional vector $\bv{y}[k]$, representing \emph{network attributes}
for event $k$:
\[ \bv{y}[k] = (y_0[k], y_1[k], \ldots, y_M[k]) \]
Specifically, each $y_m[k]$ attribute is given by a general function
of $\alpha[k]$ and $\omega[k]$:
\[ y_m[k] = \hat{y}_m(\alpha[k], \omega[k]) \: \: \forall m \in \{0, 1, \ldots, M\} \]
The functions $\hat{y}_m(\alpha[k], \omega[k])$ are arbitrary
(possibly non-linear, non-convex, discontinuous), and are only
assumed to be bounded.

Define $\overline{y}_m$ as the time average expectation  of the
attribute $y_m[k]$, averaged over all frames (assuming temporarily
that the limit exists):
\begin{eqnarray*}
&\overline{y}_m \defequiv \lim_{k\rightarrow\infty}\frac{1}{K}\sum_{k=1}^K \expect{y_m[k]}&
\end{eqnarray*}

The general problem is to find an algorithm for choosing control
actions $\alpha[k]$ for each frame $k \in \{1, 2, 3, \ldots\}$ to
solve:
\begin{eqnarray}
\mbox{Minimize:} && \overline{y}_0 \\
\mbox{Subject to:} &1)& \overline{y}_m \leq 0 \: \: \forall m \in \{1, 2, \ldots, M\} \\
&2)& \alpha[k] \in \script{A}_{\omega[k]} \mbox{ $\forall$ frames $k \in \{1, 2, \ldots\}$}
\end{eqnarray}

The solution to the general problem is given in terms of a positive
parameter $V$, which affects a peformance tradeoff.
Specifically, for each of the $M$ time average inequality constraints
$\overline{y}_m \leq 0$, define a \emph{virtual queue} $Z_m[k]$ with
$Z_m[0] = 0$, and with frame-update equation:
\begin{equation} \label{eq:z-update}
 Z_m[k+1] = \max[Z_m[k] + y_m[k], 0]
 \end{equation}
Then every frame $k$, observe the value of $\omega[k]$ and perform
the following actions:
 \begin{itemize}
 \item Choose $\alpha[k] \in \script{A}_{\omega[k]}$ to minimize:
\begin{eqnarray*}
& V\hat{y}_0(\alpha[k], \omega[k]) + \sum_{m=1}^MZ_m[k]\hat{y}_m(\alpha[k], \omega[k]) &
\end{eqnarray*}

\item Update the virtual queues $Z_m[k]$ according to
  (\ref{eq:z-update}), using the values $y_m[k] = \hat{y}_m(\alpha[k],
  \omega[k])$ determined from the above minimization.
 \end{itemize}

Assuming the problem is feasible (so that it is possible to meet the
time average inequality constraints), this algorithm will \emph{also}
meet all of these constraints, and will achieve a time average value
$\overline{y}_0$ that is within $O(1/V)$ of the optimum.
Typically, the $V$ parameter also affects the average size of the
virtual queues (these can be shown to be $O(V)$, which directly
affects the \emph{convergence time} needed for the time averages to be
close to their limiting values).
The proofs of these claims follow the theory developed
in~\cite{now,neely-energy-it,neely-thesis}, with minor notational
adjustments needed to change the timeslot averages there to
frame-averages here. Specifically, the work in
\cite{now,neely-energy-it,neely-thesis} considers i.i.d.\ events
$\omega[k]$, but the same holds for more general ergodic events
\cite{stability}.

\subsection{Corroboration Pull as a Dynamic Optimization
  Problem}
\label{sec:renewals:corrob}

Here we formulate $\maxCS$.
Define $\omega[k] \defequiv [(c_{i,j}[k]), (e_{j}[k])]$
(representing costs and credibilities of event $k$), and define
$\alpha[k] \defequiv (x_{i,j}[k])$, where $x_{i,j}[k]$ is a binary
variable that is $1$ if reporter $i \in\{1, \ldots, N\}$ uses format
$f_j\in\{f_1, \ldots, f_R\}$ on frame $k$.
The goal is to maximize the average credibility-per-frame subject to
average cost constraints and to a minimum credibility level required
on each frame $k \in \{1, 2, \ldots\}$:
\begin{eqnarray}
\mbox{Maximize:} & \overline{c} \label{eq:dyn1} \\
\mbox{Subject to:} & \overline{e}  \leq e_{av} \label{eq:dyn2} \\
& \sum_{i=1}^N\sum_{j=1}^R x_{i,j}[k]c_{i,j}[k] \geq c_{min} \: \: \forall \mbox{frames $k$} \label{eq:dyn3} \\
& x_{i,j}[k] \in \{0, 1\} \: \: \forall i, j, \forall \mbox{frames $k$}   \label{eq:dyn4} \\
& \sum_{j=1}^R x_{i,j}[k] \leq 1 \: \: \forall j, \forall \mbox{frames $k$} \label{eq:dyn5}
\end{eqnarray}
where $e_{av}$ and $c_{min}$ are given non-negative constants, and
$\overline{c}$ and $\overline{e}$ are defined:
\begin{eqnarray*}
 \overline{c} &\defequiv& \lim_{K\rightarrow\infty} \frac{1}{K}\sum_{k=1}^K \sum_{i=1}^N\sum_{j=1}^R
 \expect{x_{i,j}[k]c_{i,j}[k]} \\
 \overline{e} &\defequiv& \lim_{K\rightarrow\infty} \frac{1}{K}\sum_{k=1}^K\sum_{i=1}^N\sum_{j=1}^R\expect{x_{i,j}[k]e_j[k]}
\end{eqnarray*}

This problem fits the general stochastic optimization framework of
the previous subsection by defining $y_0(t), y_1(t)$ by:
\begin{eqnarray*}
&y_0(t) = \hat{y}_0(\alpha(t), \omega(t)) \defequiv -\sum_{i=1}^N\sum_{j=1}^R x_{i,j}[k]c_{i,j}[k] \\
&y_1(t) = \hat{y}_1(\alpha(t), \omega(t)) \defequiv - e_{av} + \sum_{i=1}^N\sum_{j=1}^R x_{i,j}[k] e_j[k]
\end{eqnarray*} and by defining the set $\script{A}_{\omega[k]}$ as
the set of all $(x_{i,j}[k])$ matrices that satisfy the constraints
(\ref{eq:dyn3})-(\ref{eq:dyn5}).
The resulting stochastic algorithm thus defines a virtual queue
$Z_1(t)$ with update:
\begin{eqnarray*}
Z_1[k+1]= \max\left[Z_1[k] - e_{av} +  \sum_{i=1}^N\sum_{j=1}^R x_{i,j}[k] e_j[k] , 0\right]
\end{eqnarray*}
and then observing the $\omega[k]$ parameters every frame $k$ and
choosing $(x_{i,j}[k])$ to solve:
  \begin{eqnarray}
 \mbox{Minimize:} & \sum_{i=1}^N\sum_{j=1}^R x_{i,j}[k][Z_1[k]e_j[k] - Vc_{i,j}[k]] \label{eq:dyn-soln1} \\
 \mbox{Subject to:} & \sum_{i=1}^N\sum_{j=1}^R x_{i,j}[k]c_{i,j}[k] \geq c_{min} \label{eq:dyn-soln2} \\
& x_{i,j}[k] \in \{0, 1\} \: \: \forall i, j,  \sum_{j=1}^R x_{i,j}[k] \leq 1 \label{eq:dyn-soln3}
 \end{eqnarray} $\maxCS$ problem has the exact same
structure as the one-shot $\minC$ problem described in previous
sections, with the exception that the cost weights are changed from
$e_j[k]$ to $Z_1[k]e_j[k] - Vc_{i,j}[k]$, which can possibly be
negative.
However, the same knapsack technique can be used to solve it (possibly
by shifting the function to be minimized by a positive constant to
make the resulting terms non-negative).
Further, we note that the performance of the stochastic algorithm
degrades gracefully when \emph{approximate} implementations are used,
such as implementations that are off from the optimal knapsack problem
by a multiplicative constant~\cite{now}.
Thus, the simple \minCAnn heuristic can be used here.

A simple and exact distributed implementation arises if the
$c_{min}$ constraint (\ref{eq:dyn3}) is removed (i.e., if $c_{min}
\defequiv 0$).
In this case the frame $k$ decisions
(\ref{eq:dyn-soln1})-(\ref{eq:dyn-soln3}) are \emph{separable over
reporters} and reduce to having each reporter $i \in \{1, \ldots, N\}$
solve:
\begin{eqnarray*}
\mbox{Minimize:} & \sum_{j=1}^Rx_{i,j}[k][Z_1[k]e_j[k] - Vc_{i,j}[k]] \\
\mbox{Subject to:} & x_{i,j}[k] \in \{0, 1\} \: \forall j \: , \:  \sum_{j=1}^Rx_{i,j}[k] \leq 1
\end{eqnarray*} That is, each reporter $i$ chooses the single format
$f_j \in \{f_1, \ldots, f_R\}$ with the smallest value of
$Z_1[k]e_j[k] - Vc_{i,j}[k]$, breaking ties arbitrarily and choosing
to be idle (with $x_{i,j}[k]=0$ for all $j \in \{1, \ldots, R\}$) if
all of the weights $Z_1[k]e_j[k] - Vc_{i,j}[k]$ are positive.
The swarm director observes the outcomes of the decisions on frame $k$
and iterates the $Z_1[k]$ update, passing the weight $Z_1[k+1]$ to all
reporters before the next event occurs.

\subsection{$\minCS$}

$\minCS$ can be formulated as follows:
\begin{eqnarray*}
\mbox{Minimize:} & \overline{e} \\
\mbox{Subject to:} & \overline{c} \geq c_{av} \: \: ,
\mbox{and constraints (\ref{eq:dyn4}), (\ref{eq:dyn5})}
\end{eqnarray*}
We can thus define $y_0(t)$ and $y_1(t)$ as:
\begin{eqnarray*}
&y_0[k] \defequiv \sum_{i=1}^N\sum_{j=1}^Rx_{i,j}[k]e_j[k] \\
&y_1[k] \defequiv c_{av} - \sum_{i=1}^N\sum_{j=1}^Rx_{i,j}[k]c_{i,j}[k]
\end{eqnarray*}
from which a similar distributed solution can be obtained.


\begin{thebibliography}{20}


\bibitem{PEIR} M. Min, S. Reddy, K. Shilton, N. Yau, and et al.
\newblock{PEIR, the personal environmental impact report},
\newblock{\em Proc. 7th ACM Mobisys}, June 2009.

\bibitem{CenceMe} E. Miluzzo, N. Lane, K. Fodor, R. Peterson, and et al.
\newblock{Sensing meets mobile social networks: the design,
implementation and evaluation of the CenceMe application},
\newblock{\em Proc. 6th ACM SenSys}, November 2008.

\bibitem{Activity} S. Kang, J. Lee, H, Jang, H. Lee, and et al.
\newblock{SeeMon: scalable and
energy-efficient context monitoring framework for sensor-rich mobile
environments}.
\newblock{\em Proc. 6th ACM MobiSys}, June 2008.

\bibitem{CyberF} R. K. Balan, D. Gergle, M. Satyanarayanan, J. Herbsleb.
\newblock{Simplifying cyber foraging for mobile devices},
\newblock{\em Proc. 5th ACM MobiSys}, June 2007.


\bibitem{Dyads}
M. T. Rivera, S. B. Soderstrom, and B. Uzzi.
\newblock{Dyads: Dynamics of Dyads in Social Networks: Assortative, Relational, and Proximity Mechanisms}.
\newblock{\em Annual Review of Sociology}, Vol. 36, pp 91-115, Jun
2010.


\bibitem{Ra10} M. Ra, J. Paek, A. B. Sharma, R. Govindan, M. H. Krieger, and M. J. Neely.
\newblock{Energy-Delay Tradeoffs in Smartphone Applications}.
\newblock{\em Proc. 8th ACM Mobisys}, June 2010.

\bibitem{knapsack}
S. Martello and P. Toth.
\newblock{\em Knapsack Problems: Algorithms and Computer Implementations}.
\newblock Wiley: Chichester, England, 1990.

\bibitem{clr}
R.K. Ahuja, T.L. Magnanti, J.B. Orlin.
\newblock{\em Network Flows: Theory, Algorithms, and Applications}.
\newblock Prentice Hall, 1993.

\bibitem{now} L. Georgiadis, M. J. Neely, and L. Tassiulas.
\newblock {Resource allocation and cross-layer control in wireless
networks}.
\newblock {\em Foundations and Trends in Networking}, vol. 1, no. 1, pp. 1-149,
 2006.

\bibitem{neely-energy-it}
M. J. Neely.
\newblock Energy optimal control for time varying wireless networks.
\newblock {\em IEEE Transactions on Information Theory}, vol. 52, no. 7, pp.
 2915-2934, July 2006.

\bibitem{neely-thesis}
M. J. Neely.
\newblock {\em Dynamic Power Allocation and Routing for Satellite and Wireless
 Networks with Time Varying Channels}.
\newblock PhD thesis, Massachusetts Institute of Technology, LIDS, 2003.

\bibitem{stability}
M.~J. Neely.
\newblock Stability and capacity regions for discrete time queueing networks.
\newblock {\em ArXiv Technical Report: arXiv:1003.3396v1}, March 2010.


\bibitem{r14}
B. Hilligoss and S. Rieh.
\newblock{Developing a Unifying Framework of Credibility Assessment:  Construct, heuristics, and Interaction in
Context}.
\newblock{\em Information Processing and Management}, Vol. 44, 1467-1484,
2008.

\bibitem{r22}
C. I. Hovland and W. Weiss.
\newblock{The Influence of Source Credibility on Communication
Effectiveness}.
\newblock{\em Public Opinion Quarterly}, Vol. 15, pp 635-650, 1951.

\bibitem{r20}
D. H. McKnight and C. J. Kacmar.
\newblock{Factors and Effects of Information Credibility}.
\newblock{\em Proc. ICEC'07}, Minneapolis, 2007.

\bibitem{r21}
B. J. Fogg and H. Tseng.
\newblock{The Elements of Computer Credibility}.
\newblock{\em Proc. CHI'99}, 2004.


\bibitem{r15}
S. Rieh and D. Danielson
\newblock{Credibility Models for Multi-Source Fusion}.
\newblock{\em Annual Review of Information Science and Technology}, Vol. 41, pp 307-364,
2007.

\bibitem{r19}
P. Chen.
\newblock{Information Credibility Assessment and Meta Data Modeling in Integrating Heterogeneous Data
Sources}.
\newblock{\em Air Force Research Laboratory Technical Report AFRL-IF-RS-TR-2002-298},
2002.

\bibitem{r16}
Wright, E. and Laskey, K.
\newblock{Trust in Virtual Teams:  Towards an Integrative Model of Trust
Formation}.
\newblock{\em 9th International Conference on Information Fusion}, Florence, Italy,
2006.

\bibitem{Cialdini04}
R. B. Cialdini.
\newblock{The Science of Persuasion}.
\newblock{\em Scientific American Mind}, Jan 2004.


 \bibitem{Saavedra10}
 S. Saavedra, B. Uzzi, and K. Hagerty.
 \newblock{Synchronicity and the collective genius of profitable day traders}.
 \newblock{Working Paper, Kellogg School of Management}, 2010.

\bibitem{pseudo}
M. R. Garey and D. S. Johnson.
\newblock {\em Computers and
Intractability: A Guide to the Theory of NP-Completeness}.
\newblock W.H. Freeman and Company, 1979.

\bibitem{r13}
Y. Hung, A. Dennis and L. Robert.
\newblock{Trust in Virtual Teams:  Towards an Integrative Model of Trust
Formation}.
\newblock{\em 37th International Conference on System Sciences},
2004.

\bibitem{r18}
Grabner-Krauter, S., Kaluscha, E., Fladnitzer, M.
\newblock{Perspectives of Online Trust and Similar Constructs: A Conceptual
Clarification}.
\newblock{\em 8th International Conference on Electronic Commerce (ICEC)}, New Brunswick, Canada,
2006.

\bibitem{r17}
Cho, J. and Swami, A.
\newblock{Towards Trust-Based Cognitive Networks:  A Survey of Trust Management for Mobile Ad Hoc
Networks}.
\newblock{\em 14th International Command and Control Research and Technology Symposium}, Washington, DC,
2009.


\end{thebibliography}

\section{Related Work}
\label{sec:related}

We are not aware of any prior work in the wireless networking
literature that has tackled information credibility assessment.

However, other fields have actively explored credibility, defined as
the believability of sources or information~\cite{r14,r20,r21}.
Credibility has been investigated in a number of fields including
information science, human communication, human-computer interaction
(HCI), marketing, psychology and so on \cite{r15}.
In general, research has focused on two threads: the factors that
affect credibility, and the dynamics of information credibility.

The seminal work of Hovland et al.~\cite{r22} may be the earliest
attempt on exploring credibility, which discusses how the various
characteristics of a source can affect a recipient's acceptance of
a message, in the context of human communication.
Rieh, Hilligoss and other explore important dimensions of
credibility in the context of social interactions~\cite{r14, r15,
r19}, such as trustworthiness, expertise and information validity.
McKnight and Kacmar~\cite{r14} study a unifying framework of
credibility assessment in which three distinct levels of credibility
are discussed: construct, heuristics, and interaction.
Their work is in the context of assessing the credibility of websites
as sources of information.

Wright and Laskey~\cite{r16} discuss how to tackle fusion of
credible information.
They present a weighting based, probabilistic model to compute
uncertain information credibility from diverse sources.
Several techniques are combined with this model, like prior
information, evidence when available and opportunities for learning
from data.

Sometimes, the terms credibility and trust are used synonymously.
However, they are distinct notions: while trust refers to beliefs and
behaviors associated with the acceptance of risk, credibility refers
to the believability of a source, and a believable source may or may
not result in associated trusting behaviors~\cite{r15}.

Finally, there is a body of work that has examined processes and
propagation of credible information.
Corroboration as a process of credibility assessment is discussed in
\cite{Cialdini04}.
Proximity, both geographic and social, and its role in credibility
assessment is discussed in~\cite{Dyads}: our role of geographic
distance as a measure of credibility is related to this discussion.
Saavedra et al.\cite{Saavedra10} explore the dynamics and the
emergence of synchronicity in decision-making when traders use
corroboration as a mechanism for trading decisions.


\section{Conclusions and Future Work}
\label{sec:concl-future-work}

In this paper, we have explored the design space of algorithms for a
new problem, optimizing pull corroboration in an emerging application
area, social swarming.
We have proposed optimal special-case algorithms, computationally
efficient approximations, and decentralized optimal stochastic
variants.
However, our work is merely an initial foray into a broad and
unexplored space, with several directions for future work: increasing
credibility and cost model realism, incorporating malice, allowing
peers to relay reports, and exploring other realistic, yet efficient
and near-optimal special-case solutions.



\end{document}